\documentclass{amsart}
\newtheorem{Theo}{Theorem}
\newtheorem{Lem}{Lemma}
\begin{document}
\title[Lower Bounds for the Large Sieve]{Lower Bounds for Expressions
  of Large Sieve Type} 
\author[J.-C. Schlage-Puchta]{Jan-Christoph Schlage-Puchta}
\begin{abstract}
We show that the large sieve is optimal for almost all exponential
sums. 
\end{abstract}
\maketitle
Keywords: Large sieve, random series\\[2mm]
MSC-Index: 11N35, 30B20\\[2mm]
Let $a_n, 1\leq n\leq N$ be complex numbers, and set
$S(\alpha)=\sum_{n\leq N} a_n e(n\alpha)$, where $e(\alpha)=\exp(2\pi i\alpha)$. Large Sieve inequalities
aim at bounding the number of places where this sum can be
extraordinarily large, the basic one being the bound 
\[
\sum_{q\leq Q}\underset{(a, q)=1}{\sum_{1\leq a\leq q}} \left|S\left(\frac{a}{q}\right)\right|^2
\leq (N+Q^2)\sum_{n\leq N}|a_n|^2
\]
(see e.g. \cite{Mon} for variations and applications). P. Erd\H os and
A. R\'enyi \cite{ER} considered lower bounds of the same type, 
in particular they showed that the bound
\begin{equation}\label{ER}
\sum_{q\leq Q}\sum_{(a, q)=1} \left|S\left(\frac{a}{q}\right)\right|^2
\ll N\sum_{n\leq N}|a_n|^2,
\end{equation}
valid for $Q\ll\sqrt{N}$, is wrong for almost all choices of
coefficients $a_n\in\{1, -1\}$, provided that $Q>C\sqrt{N}\log N$, and
that the standard probabilistic argument fails to decide whether
(\ref{ER}) is true in the range $\sqrt{N}<Q<\sqrt{N}\log N$. In this
note, we show that (\ref{ER}) indeed fails throughout this
range.

\begin{Theo}
Let $S(\alpha)$ be as above. Then
\begin{equation}\label{probab}
\sum_{q\leq Q}\sum_{(a, q)=1}\left|S\left(\frac{a}{q}\right)\right|^2
\geq \varepsilon Q^2\sum_{n\leq N} |a_n|^2
\end{equation}
holds true with probability tending to 1 provided $\varepsilon$ tends to
0, and $Q^2/N$ tends to infinity.
\end{Theo}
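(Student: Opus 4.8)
The plan is to apply the second moment method directly to $T:=\sum_{q\le Q}\sum_{(a,q)=1}|S(a/q)|^{2}$. Since $a_{n}\in\{1,-1\}$ we have $\sum_{n\le N}|a_{n}|^{2}=N$, so the assertion is that $\mathbf{P}\bigl(T\ge\varepsilon Q^{2}N\bigr)\to1$. Expanding the square and carrying out the sum over $a$ first produces Ramanujan sums $c_{q}(k):=\sum_{1\le a\le q,\,(a,q)=1}e(ak/q)$: one gets
\[
T=\sum_{m,n\le N}a_{m}a_{n}\,R(m-n),\qquad R(k):=\sum_{q\le Q}c_{q}(k).
\]
I would peel off the diagonal $m=n$, whose contribution is the deterministic quantity $NR(0)=N\sum_{q\le Q}\phi(q)\sim\tfrac{3}{\pi^{2}}NQ^{2}$, leaving the mean-zero random variable $\mathcal{E}:=\sum_{m\ne n}a_{m}a_{n}R(m-n)$. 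Because $\varepsilon\to0$ we have $\varepsilon Q^{2}N<\tfrac12 NR(0)$ once $N$ is large, so it suffices to prove $\mathbf{P}\bigl(|\mathcal{E}|\ge\tfrac12 NR(0)\bigr)\to0$: on the complementary event, $T\ge NR(0)-|\mathcal{E}|\ge\tfrac12 NR(0)\ge\varepsilon Q^{2}N$.

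To control $\mathcal{E}$ I would compute its variance. Using $c_{q}(-k)=c_{q}(k)$ and the fact that, for $m\ne n$ and $m'\ne n'$, $\mathbf{E}[a_{m}a_{n}a_{m'}a_{n'}]$ is $1$ when $\{m,n\}=\{m',n'\}$ and $0$ otherwise, orthogonality gives $\mathbf{E}[\mathcal{E}^{2}]=2\sum_{m\ne n}R(m-n)^{2}\le 4N\sum_{k\le N}R(k)^{2}$. Everything thus reduces to an upper bound for $\sum_{k\le N}R(k)^{2}$. Here the useful identity — obtained from $c_{q}(k)=\sum_{d\mid(q,k)}d\,\mu(q/d)$ by interchanging the order of summation — is $R(k)=\sum_{d\mid k}d\,M(Q/d)$, where $M(x)=\sum_{n\le x}\mu(n)$ is the Mertens function (with $M(x)=0$ for $x<1$). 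Squaring, counting the $k\le N$ divisible by $[d,d']$, and using $dd'/[d,d']=(d,d')$ together with $\lfloor N/[d,d']\rfloor\le N/[d,d']$, one obtains
\[
\sum_{k\le N}R(k)^{2}\le N\sum_{d,d'\le Q}(d,d')\,|M(Q/d)|\,|M(Q/d')|.
\]
The trivial bound $|M(x)|\le x$ and the elementary estimate $\sum_{d,d'\le Q}(d,d')/(dd')=\sum_{e\le Q}\phi(e)\bigl(\sum_{d\le Q,\,e\mid d}1/d\bigr)^{2}\ll(\log Q)^{3}$ then yield $\sum_{k\le N}R(k)^{2}\ll NQ^{2}(\log Q)^{3}$, hence $\mathbf{E}[\mathcal{E}^{2}]\ll N^{2}Q^{2}(\log Q)^{3}$. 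By Chebyshev's inequality, $\mathbf{P}\bigl(|\mathcal{E}|\ge\tfrac12 NR(0)\bigr)\le 4\,\mathbf{E}[\mathcal{E}^{2}]/(NR(0))^{2}\ll(\log Q)^{3}/Q^{2}$, and this tends to $0$ because $Q\to\infty$ — which is forced by $Q^{2}/N\to\infty$. Together with the first paragraph this proves the theorem.

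The only step that calls for an actual estimate is the bound for $\sum_{k\le N}R(k)^{2}$, that is, for $\operatorname{Var}(\mathcal{E})$; the identity $R(k)=\sum_{d\mid k}d\,M(Q/d)$ is what makes it accessible. One subtlety to keep in mind is that the sharp order $\sum_{k\le N}R(k)^{2}\asymp NQ^{2}$ holds only while $Q\ll\sqrt N$: for larger $Q$ the floor $\lfloor N/[d,d']\rfloor$ annihilates most pairs $(d,d')$, so the formal main term $N\sum_{d,d'\le Q}(d,d')M(Q/d)M(Q/d')=N\sum_{e\le Q}\phi(e)$ (which one would evaluate using $\sum_{d\le x}M(x/d)=1$) grossly overshoots. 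Since only an upper bound is needed, however, the crude estimates above lose nothing worse than a power of $\log Q$, a loss easily absorbed by the factor $1/Q^{2}$; this is the slack that carries the argument through the whole range $Q^{2}/N\to\infty$ — in particular through $\sqrt N<Q<\sqrt N\log N$, where the standard probabilistic argument of Erd\H os and R\'enyi was inconclusive.
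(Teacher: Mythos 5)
Your argument is correct, and it takes a genuinely different route from the paper's. The paper first establishes a \emph{deterministic} lower bound (the Lemma) by adapting Gallagher's proof of the large sieve, in which the only obstruction is the quantity $M(1/A)$ measuring how much of the mass of $\int_0^1|S|^2$ can concentrate on a set of measure $1/A$; the probabilistic input is then just the fourth-moment identity $\mathrm{E}\int_0^1|S|^4=2N^2-N$, which gives $\mathrm{E}\,M(x)\le\sqrt{2x}$ and makes the deterministic bound nontrivial with high probability. You instead apply the second moment method directly to $T=\sum_{q\le Q}\sum_{(a,q)=1}|S(a/q)|^2$ via the Ramanujan-sum expansion. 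I checked the steps that carry the weight: the evenness $R(-k)=R(k)$, the identity $R(k)=\sum_{d\mid k}d\,M(Q/d)$ with the Mertens function (not to be confused with the paper's $M(x)$), the reduction $\mathrm{E}[\mathcal{E}^2]=2\sum_{m\ne n}R(m-n)^2\le 4N\sum_{k\le N}R(k)^2$, and the estimate $\sum_{k\le N}R(k)^2\le NQ^2(1+\log Q)^3$ --- here your insertion of absolute values around the Mertens factors is what legitimizes replacing $\lfloor N/[d,d']\rfloor$ by $N/[d,d']$ despite the signed summands. Chebyshev then gives failure probability $O\bigl((\log Q)^3/Q^2\bigr)$, which tends to $0$ because $Q^2/N\to\infty$ forces $Q\to\infty$, and on the complementary event $T\ge\tfrac12 NR(0)\sim\tfrac{3}{2\pi^2}NQ^2\ge\varepsilon Q^2N$ once $\varepsilon<\tfrac{3}{2\pi^2}$. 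In the purely probabilistic direction your proof in fact gives more than the theorem asks: a fixed positive constant in place of $\varepsilon\to0$, an explicit error probability, and validity for every $Q\to\infty$ (the hypothesis $Q^2/N\to\infty$ being needed only to contradict the Erd\H os--R\'enyi bound (\ref{ER})). What it does not yield is the paper's unconditional inequality (\ref{sieve}), valid for \emph{every} coefficient sequence, which isolates a single analytic quantity as the obstruction and is the paper's main structural contribution; that statement cannot be recovered from a variance computation.
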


Our approach differs from \cite{ER} in so far as we first prove an
unconditional lower bound, which involves an awkward expression, and
show then that almost always this expression is small. We show the following.

\begin{Lem}
Let $S(\alpha)$ be as above, and define
\[
M(x)=\sup\limits_\mathfrak{m} \frac{\int\limits_\mathfrak{m} |S(u)|^2\; du}{\int\limits_0^1
|S(u)|^2\; du},
\]
where $\mathfrak{m}$ ranges over all measurable subsets of $[0, 1]$ of measure
$x$. Then for any real parameter $A>1$ we have the estimate
\begin{equation}\label{sieve}
\sum_{q\leq Q}\sum_{(a, q)=1}\left|S\left(\frac{a}{q}\right)\right|^2
\geq \left(\frac{Q^2}{A}\left(1-M\left(\frac{1}{A}\right)\right) - 6\pi
N A \right) \sum_{n\leq N} |a_n|^2.
\end{equation}
\end{Lem}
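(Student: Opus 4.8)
The plan is to replace each point value $|S(a/q)|^2$ by the average of $|S|^2$ over a short arc centred at $a/q$, to bound the cost of doing so by $\|S'\|_2\|S\|_2\ll N\sum|a_n|^2$, and then to observe that the arcs, ranging over $q\le Q$, cover all of $[0,1]$ except a set of measure at most $1/A$ — on whose complement $|S|^2$ cannot concentrate, by the very definition of $M$.

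Step 1: a local inequality. Since $\frac{d}{dt}|S(t)|^2=2\,\mathrm{Re}\,(S'(t)\overline{S(t)})$, for any arc $I$ and any $x\in I$ one has $\big||S(x)|^2-|S(u)|^2\big|\le 2\int_I|S'||S|$ for all $u\in I$; averaging over $u\in I$ gives
\[
|S(x)|^2\ \ge\ \frac1{|I|}\int_I|S|^2\ -\ 2\int_I|S'||S|.
\]
Step 2: summing over Farey fractions. Put $\delta=A/Q^2$, let $I_{a/q}$ be the arc of length $\delta$ centred at $a/q$, and set $F=\sum_{q\le Q}\sum_{(a,q)=1}\mathbf{1}_{I_{a/q}}$. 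Applying the local inequality with $I=I_{a/q}$, $x=a/q$, and adding,
\[
\sum_{q\le Q}\sum_{(a,q)=1}\Big|S\Big(\tfrac aq\Big)\Big|^2\ \ge\ \frac1\delta\int_0^1F|S|^2\ -\ 2\int_0^1F|S'||S|.
\]
Consecutive Farey fractions of order $Q$ differ by at least $Q^{-2}$, so every arc of length $\delta$ contains at most $\delta Q^2+1=A+1$ of them; hence $\|F\|_\infty\le A+1$. By Cauchy--Schwarz and Parseval ($\|S\|_2^2=\sum|a_n|^2$ and $\|S'\|_2^2=4\pi^2\sum n^2|a_n|^2\le 4\pi^2N^2\sum|a_n|^2$) the second term is $\le 2\|F\|_\infty\|S'\|_2\|S\|_2\le 6\pi NA\sum|a_n|^2$. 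Since $F\ge 1$ on $E:=\bigcup I_{a/q}$ and $\tfrac1\delta=\tfrac{Q^2}{A}$, the first term satisfies
\[
\frac1\delta\int_0^1F|S|^2\ \ge\ \frac{Q^2}{A}\int_E|S|^2\ =\ \frac{Q^2}{A}\Big(\sum_{n\le N}|a_n|^2-\int_{[0,1]\setminus E}|S|^2\Big)\ \ge\ \frac{Q^2}{A}\big(1-M(|[0,1]\setminus E|)\big)\sum_{n\le N}|a_n|^2,
\]
the last inequality being just the definition of $M$ applied to the set $[0,1]\setminus E$.

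Step 3: the uncovered set is small. It remains to check that $|[0,1]\setminus E|\le 1/A$; as $M$ is non-decreasing this yields $M(|[0,1]\setminus E|)\le M(1/A)$, completing the proof. Now $[0,1]\setminus E$ meets the Farey gap between consecutive fractions $a_1/q_1<a_2/q_2$ of order $Q$ in an interval of length $(g-\delta)^+$, where $g=1/(q_1q_2)$, so $|[0,1]\setminus E|=\sum_{\mathrm{gaps}}(g-\delta)^+$. The key structural input is that consecutive fractions satisfy $q_1+q_2>Q$, which makes long gaps rare: this forces $\min(q_1,q_2)<2Q/A$ whenever $g>\delta$, so the number of Farey gaps longer than $t$ is $O\big((t^2Q^2)^{-1}\big)$, and a short computation (keeping track of the coprimality of $q_1,q_2$, which contributes the usual factor $6/\pi^2$) gives $\sum_{\mathrm{gaps}}(g-\delta)^+\le\tfrac1{\delta Q^2}-\tfrac1Q=\tfrac1A-\tfrac1Q<\tfrac1A$. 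I expect this Farey-gap estimate to be the only point of the argument that is not soft — or rather, the real work is in recognising that the quantity one must control is precisely the measure of the part of $[0,1]$ missed by the Farey arcs, and that it really is this small; everything else is Cauchy--Schwarz, Parseval, the definition of $M$, and the $Q^{-2}$-separation of Farey fractions.
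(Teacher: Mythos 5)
Your Steps 1 and 2 are essentially the paper's argument: the paper also replaces $|S(a/q)|^2$ by the average of $|S|^2$ over an arc of length $\delta=A/Q^2$, bounds the multiplicity of the arcs by the $Q^{-2}$-separation of Farey fractions, and controls the correction term by Cauchy--Schwarz and Parseval. (The paper uses the identity $f(1/2)=\int_0^1 f+\int_0^{1/2}uf'-\int_{1/2}^1(1-u)f'$, whose tent-shaped weight averages to $1/2$, so its local inequality reads $|S(\alpha)|^2\ge\frac1\delta\int_I|S|^2-\int_I|SS'|$, with coefficient $1$ rather than your $2$. This matters for your constants: your error term is $2\|F\|_\infty\|S'\|_2\|S\|_2\le(4A+4)\pi N\sum|a_n|^2$, which is $\le 6\pi NA\sum|a_n|^2$ only when $A\ge2$; for $1<A<2$ your Step 2 does not deliver the stated constant. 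This is cosmetic for the application, but the lemma is asserted for all $A>1$.)

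The genuine issue is Step 3, which you yourself flag as the only non-soft point and then do not carry out. You need $\sum_{\mathrm{gaps}}(g-\delta)^+\le 1/A$ for arcs of \emph{length} $\delta$, and you derive it from the assertion that the number of Farey gaps of length $>t$ is $O\big((t^2Q^2)^{-1}\big)$ followed by ``a short computation gives $\le\frac1{\delta Q^2}-\frac1Q$''. The implied constant is exactly what is at stake: an $O$-bound with constant $C$ would only give uncovered measure $\le C/A$ and hence $M(C/A)$ in the lemma. The claim is in fact true --- the asymptotic density of gaps of length $>t$ carries the constant $6/\pi^2<1$, and the crude bound $\varphi(q)\le q$ can be pushed to give constant $1$ --- but verifying $N(t)\le(t^2Q^2)^{-1}$ uniformly, with the coprimality condition, the constraint $q_1+q_2>Q$, and the boundary ranges of $q_1$ near $1/(tQ)$ all kept exact, is precisely the content of the step, and it is missing. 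The paper avoids gap statistics entirely: by Dirichlet's theorem every uncovered $u$ satisfies $|u-a/q|\le 1/(qQ)$ for some $q<Q/A$, so the uncovered set lies in $\bigcup_{q<Q/A}\bigcup_a\big[\frac aq-\frac1{qQ},\frac aq+\frac1{qQ}\big]$ minus the arcs, of total measure $\le\sum_{q<Q/A}\varphi(q)\frac{2Q-2Aq}{qQ^2}\le\frac1{Q^2}\int_0^{Q/A}(2Q-2At)\,dt=\frac1A$. This is a two-line substitute for your Farey-gap computation (applied there to arcs of half-length $\delta$, multiplicity $\le 2A+1\le 3A$, which together with the coefficient-$1$ local inequality yields the stated $6\pi NA$). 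I would either adopt the Dirichlet argument or write out the gap-counting estimate in full with explicit constants; as it stands, the one step that carries the arithmetic content of the lemma is asserted rather than proved.
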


\begin{proof}
Our proof adapts Gallagher's proof of an upper bound large
sieve \cite{Gal}. For every $f\in C^1([0, 1])$, we have 
\[
f(1/2) = \int\limits_0^1 f(u)\;du + \int\limits_0^{1/2} uf'(u)\;du -
\int\limits_{1/2}^1 (1-u)f'(u)\;du.
\]
Putting
$f(u)=|S(u)|^2$, and using the linear substitution $u\mapsto
(\alpha-\delta/2)+\delta u$, we obtain for every $\delta>0$
and any $\alpha\in[0, 1]$
\begin{eqnarray*}
|S(\alpha)|^2 & = & \frac{1}{\delta}\int\limits_{\alpha-\delta/2}^{\alpha+\delta/2}
|S(u)|^2\;du + \frac{1}{\delta}\int\limits_{\alpha-\delta/2}^{\alpha}
\big(\delta/2-|u-\alpha|\big)\big(S'(u)S(-u)-S(u)S'(-u)\big)\; du \\
 &&\qquad - \frac{1}{\delta}\int\limits_{\alpha}^{\alpha+\delta/2}
\big(\delta/2-|u-\alpha|\big)\big(S'(u)S(-u)-S(u)S'(-u)\big)\; du.
\end{eqnarray*}
We have $|S(u)|=|S(-u)|$ and $|S'(-u)|=|S'(u)|$, thus $|S'(u)S(-u)-S(u)S'(-u)|
\leq 2|S(u)S'(u)|$, and we obtain
\begin{eqnarray*}
|S(\alpha)|^2 & \geq & \frac{1}{\delta}\int\limits_{\alpha-\delta/2}^{\alpha+\delta/2}
|S(u)|^2\;du - \frac{1}{\delta}\int\limits_{\alpha-\delta/2}^{\alpha+\delta/2}2\Big(\frac{1}{2}-\frac{|u-\alpha|}{\delta}\Big)|S(u)S'(u)|\; du.\\
 & \geq & \frac{1}{\delta}\int\limits_{\alpha-\delta/2}^{\alpha+\delta/2}
|S(u)|^2\;du - \int\limits_{\alpha-\delta/2}^{\alpha+\delta/2}|S(u)S'(u)|\; du.
\end{eqnarray*}
We now set $\delta = A/Q^2$. We can safely assume that
$\delta<\frac{1}{2}$, since our claim would be trivial
otherwise. Summing over all fractions $\alpha = \frac{a}{q}$ with
$q\leq Q, (a, q)=1$, we get 
\begin{multline}\label{long}
\sum_{q\leq Q}\,\sum_{(a, q)=1}\left|S\left(\frac{a}{q}\right)\right|^2
\;\geq\; \frac{Q^2}{A}\int\limits_0^1|S(u)|^2\;du\;-\\
\frac{Q^2}{A}\int\limits_{m(Q, A)}|S(u)|^2\;du\; -\;
\int\limits_0^1R(u)|S(u)S'(u)|\; du,
\end{multline}
where
\[
R(u) = \#\left\{a, q: (a, q)=1, q\leq Q,\left|u-\frac{a}{q}\right|\leq
\frac{A}{Q^2}\right\},
\]
and
\[
m(Q, A) = \{u\in[0, 1]: R(u)=0\}.
\]
To bound $R(u)$, let $\frac{a_1}{q_1}<\frac{a_2}{q_2}< \cdots <
\frac{a_k}{q_k}$ be the list of all fractions with $q_i\leq Q$,
$\left|u-\frac{a_i}{q_i}\right|\leq \frac{A}{Q^2}$. We have for $i\neq
j$ the bound
\[
\left|\frac{a_i}{q_i}-\frac{a_j}{q_j}\right|\geq\frac{1}{q_iq_j}\geq\frac{1}{Q^2},
\]
that is, the fractions $\frac{a_1}{q_1},\ldots, \frac{a_k}{q_k}$ form
a set of points with distance $>\frac{1}{Q^2}$ in an interval of
length $\frac{2A}{Q^2}$. There can be at most $2A+1$ such
points, hence, $R(u)\leq 3A$.

Next, we bound $|m(Q, A)|$. By Dirichlet's theorem, we have that for each
real number $\alpha\in[0, 1]$ there exists some $q\leq Q$ and some
$a$, such that $|\alpha-\frac{a}{q}|\leq\frac{1}{qQ}$. If
$\alpha\in m(Q, A)$, we must have $\frac{1}{qQ}>\frac{A}{Q^2}$, that
is, $q<Q/A$. Hence, we obtain
\begin{eqnarray*}
|m(Q, A)| & \leq & \left|\bigcup_{q<Q/A}\bigcup_{(a, q)=1}
\left[\frac{a}{q}-\frac{1}{qQ}, \frac{a}{q}+\frac{1}{qQ}\right]\setminus\left[\frac{a}{q}-\frac{A}{Q^2}, \frac{a}{q}+\frac{A}{Q^2}\right]\right|\\
 & \leq & \sum_{q<Q/A} \frac{\varphi(q)(2Q-2Aq)}{qQ^2} \leq
\frac{1}{Q^2}\int_0^{Q/A} 2Q-2At\;dt = \frac{1}{A}.
\end{eqnarray*}
We can now estimate the right hand side of (\ref{long}). The first
summand is $\frac{Q^2}{A}\sum_{n\leq N} |a_n|^2$, while the second is
by definition at most $\frac{Q^2}{A}M(1/A)$. For the third we apply the
Cauchy-Schwarz-inequality to obtain
\begin{eqnarray*}
\left(\int\limits_0^1|S(u)S'(u)|\; du\right)^2 & \leq &
\left(\int\limits_0^1|S(u)|^2\; du\right)\left(\int\limits_0^1
|S'(u)|^2\; du\right)\\ 
 & = & \left(\sum_{n\leq N} |a_n^2|\right)\left(\sum_{n\leq N} (2\pi
n)^2|a_n^2|\right)\\
 & \leq & (2\pi N)^2\left(\sum_{n\leq N} |a_n^2|\right)^2.
\end{eqnarray*}
Hence, the last term in (\ref{long}) is bounded above by $3A(2\pi
N)\sum_{n\leq N}|a_n|^2$, and inserting our bounds into (\ref{long})
yields the claim of our lemma.
\end{proof}

Now we deduce Theorem~1. Let $S(\alpha)$ be a random sum in the sense
that the coefficients 
$a_n\in\{1, -1\}$ are chosen at random. We compute the expectation of
the fourth moment of $S(\alpha)$.
\begin{eqnarray*}
\mathrm{E}\int\limits_0^1|S(u)|^4\;du & = & \mathrm{E}
\sum_{{\mu_1+\mu_2=\nu_1+\nu_2}\atop{\mu_1, \mu_2, \nu_1,
\nu_2\leq N}} a_{\nu_1}a_{\nu_2}a_{\mu_1}a_{\mu_2}\\
 & = & \#\{\mu_1, \mu_2, \nu_1, \nu_2\leq N: \{\mu_1, \mu_2\} =
 \{\nu_1, \nu_2\}\}\\
 & = & 2N^2-N.
\end{eqnarray*}
If $m\subseteq[0,1]$ is of measure $x$, then $\int\limits_m|S(u)|^2\;du
\leq \sqrt{x}\left(\int\limits_m|S(u)|^4\;du\right)^{1/2}$, thus
$\mathrm{E} M(x)\leq \sqrt{2x}$. In particular, we have $M(x)\leq 1/2$
with probability $\geq 1-\sqrt{8x}$. Let $\delta>0$ be given, and set
$A=8\delta^{-2}$. Then with probability $\geq 1-\delta$ we have
$M(1/A)\leq 1/2$, and (\ref{sieve}) becomes
\begin{eqnarray*}
\sum_{q\leq Q}\sum_{(a, q)=1}\left|S\left(\frac{a}{q}\right)\right|^2
 & \geq & \left(\frac{Q^2\delta^2}{16} - 48\delta^{-2}\pi
N \right) \sum_{n\leq N} |a_n|^2\\
 & \geq &\frac{Q^2\delta^2}{32} \sum_{n\leq N} |a_n|^2,
\end{eqnarray*}
provided that $Q^2>1536\delta^4 N$. Hence, for fixed $\epsilon$, the
relation (\ref{probab}) becomes true with probability
$1-\sqrt{1024\epsilon}$, provided that $Q^2/N$ is sufficiently
large. Hence, our claim follows.

I would like to thank the referee for improving the quality of this paper.

Jan-Christoph Schlage-Puchta\\
Department of Pure Mathematics and Computer Algebra\\
Universiteit Gent\\
Krijgslaan 281\\
9000 Gent\\
Belgium\\
\verb$jcsp@cage.ugent.be$

\end{document}